\theoremstyle{definition}
\newtheorem{defn}{Definition}[section]
\theoremstyle{definition}
\theoremstyle{plain}
\theoremstyle{plain}
\theoremstyle{plain}
\theoremstyle{plain}
\newtheorem{thm}[defn]{Theorem}
\theoremstyle{plain}
\newtheorem{introthm}{Theorem}
\theoremstyle{plain}
\theoremstyle{plain}
\theoremstyle{remark}
\newtheorem{rmk}[defn]{Remark}
\crefname{lem}{Lemma}{Lemmas}
\crefname{defn}{Definition}{Definitions}
\crefname{cor}{Corollary}{Corollaries}
\crefname{prop}{Proposition}{Propositions}
\crefname{thm}{Theorem}{Theorems}
\crefname{rmk}{Remark}{Remarks}
\crefname{section}{Section}{Sections}
\crefname{equation}{Equation}{Equivalences}
\crefname{conj}{Conjecture}{Conjectures}
\newcommand{\Z}{{\mathbb{Z}}}
\newcommand{\E}{{\mathbb{E}}}
\newcommand{\G}{{\mathbb{G}}}
\renewcommand{\S}{{\mathbb S}}
\newcommand{\finfld}[1]{\mathbb{F}_{#1}}
\renewcommand{\lim}[1]{{\underset{\substack{#1}}{\operatorname{lim}}\,}}
\newcommand{\Sp}{{\operatorname{Sp}}}
\newcommand{\op}[1]{#1^{\operatorname{op}}}
\newcommand{\topos}[1]{{\mathcal {#1}}}
\newcommand{\Cat}[1]{{\mathcal {#1}}}
\newcommand{\CAlg}[1]{{\operatorname{CAlg}(#1)}}
\newcommand{\TMF}{\operatorname{TMF}}
\newcommand{\CTMF}{\mathbf{TMF}}
\newcommand{\Ctmf}{\mathbf{tmf}}
\newcommand{\KU}{\operatorname{KU}}
\newcommand{\KO}{\operatorname{KO}}
\newcommand{\KR}{\mathbf{KR}}
\newcommand{\kR}{\mathbf{kR}}
\newcommand{\Me}{\topos{M}_{ell}} 
\newcommand{\MT}{\topos{M}_1(3)} 
\newcommand{\Isog}{{\operatorname{Isog}}}
\newcommand{\Ot}{\Cat{O}^{\textup{top}}} 
\author{Anton Engelmann\footnote{\href{www.anton-engelmann.com}{www.anton-engelmann.com}}}
\date{}
\title{Stable Adams operations on $RO(C_2)$-graded homotopy groups}
\begin{document}

\maketitle 
\begin{abstract}
    The $C_2$-spectrum of Atiyah's Real $K$-theory is denoted by $\KR$ and the $C_2$-spectrum of topological modular forms of level structure $\Gamma_1(3)$ by $\CTMF_1(3)$.
    In this short note we compute the $C_2$-equivariant stable Adams operations on the $RO(C_2)$-graded homotopy groups of $\KR$ and $\CTMF_1(3)$. 
\end{abstract}

\section{Introduction} \label{sec:introduction}
In the study of the connection between elliptic curves and cohomology theories, Hopkins \cite{Hopkins:TopologicalModularForms} constructed the universal elliptic cohomology theory $\TMF$ of topological modular forms.
This is the spectrum corresponding to the moduli stack $\Me$ of elliptic curves.
Analog to the situation in classical modular forms, there exist spectra of topological modular forms of certain level structures.
The moduli stack $\topos M_0(n)$, classifying elliptic curves with a chosen cyclic subgroup of order $n$, yields the spectrum $\TMF_0(n)$, and the moduli stack $\topos M_1(n)$, which classifies elliptic curves with a chosen point of exact order $n$, has associated spectrum $\TMF_1(n)$.

Moreover, Adams operations on $K$-theory proved to be important tools in the past.
Two of the major uses were Adams and Atiyah's \cite{AdamsAtiyah:HopfOne} proof of the Hopf invariant one problem using unstable Adams operations and Quillen's \cite{Quillen:CohomologyKTheoryGL} computation of the algebraic $K$-theory of finite fields using 
stable Adams operations.
As topological modular forms are to be viewed as a higher chromatic analogue of topological $K$-theory, it is natural to ask for the existence of stable Adams operations 
on the different versions of topological modular forms.
These were constructed by Davies \cite{Davies:Hecke,Davies:ConstructingCalculatingAO}.

Recall that the $C_2$-action on a complex vector bundle is given by complex conjugation, which carries over to the spectrum $\KU$ of complex $K$-theory.
The $C_2$-action on the spectrum $\TMF_1(3)$ is induced by an underlying action as well:
there is a natural $(\Z/n)^\times$-action on $\topos M_1(n)$ given by sending the point $x$ of order $n$ to $[k]x$ for $k \in (\Z/n)^\times$.
It follows that both, $\KU$ and $\TMF_1(3)$, can be made genuinely equivariant with $C_2$-equivariant stable Adams operations.
We will denote the $C_2$-spectra by $\KR$ and $\CTMF_1(3)$.
There is even more to say: in both cases the endomorphism $[-1]$ induces the endomorphism $\psi^{-1}$ of spectra.
If $\psi^k$ is the endomorphism induced by $[k]$ on one of the above spectra, we even have coherently that $\psi^{-1} \circ \psi^k \simeq \psi^k \circ \psi^{-1}$ and $\psi^{k} \circ \psi^l \simeq \psi^{kl}$ making the $\psi^k$ $C_2$-equivariant.
These endomorphisms $\psi^k$ are then called \emph{stable Adams operations}.

The goal of this paper is to compute the $C_2$-equivariant stable Adams operations on the representation graded homotopy groups of these $C_2$-spectra.
Since $K$-theory and topological modular forms are so closely related it is not surprising that the result and the strategy for the computations are essentially the same.
\begin{introthm}[\cref{thm:KR:main-thm}]
    Let $k \in \Z$ and $x \in \pi_{a+b\sigma}^{C_2} \KR[\frac{1}{k}]$, where $\sigma$ is the sign representation of $C_2$.
    Then \[\psi^k(x) =k^{\frac{a + b}{2}}x \in \pi_{a+b\sigma}^{C_2} \KR[\frac{1}{k}].\]
    In particular, if $x$ is torsion (necessarily $2$-torsion), then $\psi^k(x)=x$.
\end{introthm}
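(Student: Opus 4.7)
The plan is to use that $\psi^k$ is a morphism of $C_2$-equivariant commutative ring spectra, and hence a graded ring endomorphism of $\pi_\star^{C_2} \KR[\tfrac{1}{k}]$. Thus it suffices to compute $\psi^k$ on a set of multiplicative generators. The first step is to recall the $RO(C_2)$-graded homotopy ring of $\KR$ (Atiyah, Hu--Kriz): the torsion-free part is generated by the equivariant Bott element $u \in \pi_{1+\sigma}^{C_2} \KR$, whose underlying class in $\pi_2 \KU$ is the classical Bott element $\beta$, and all torsion is $2$-primary.

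The key step is the computation $\psi^k(u) = ku$. Since $\psi^k$ is $C_2$-equivariant, it commutes with restriction to the underlying non-equivariant spectrum, which takes $\KR$ to $\KU$ and equivariant Adams operations to classical ones. The classical identity $\psi^k(\beta) = k\beta$, together with the fact that the restriction $\pi_{1+\sigma}^{C_2} \KR \to \pi_2 \KU$ is an isomorphism $\Z \xrightarrow{\cong} \Z$ sending $u \mapsto \beta$, then forces $\psi^k(u) = ku = k^{(1+1)/2} u$. Multiplicativity propagates this to the entire torsion-free part: every torsion-free class in $\pi_{a+b\sigma}^{C_2} \KR$ is an integer multiple of $u^{(a+b)/2}$ (which already forces $a+b$ to be even), so $\psi^k$ acts there as $k^{(a+b)/2}$.

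For a $2$-torsion class $x$, one splits into two cases. If $k$ is even, then inverting $k$ kills $x$ and the formula is vacuous. If $k$ is odd, then $k \equiv 1 \pmod 2$, so multiplication by any integer power of $k$ is the identity on any $2$-torsion subgroup and the formula reduces to $\psi^k(x) = x$. To actually verify this, one writes each $2$-torsion class as a product $u^m \cdot s$ with $s$ pulled back from $\pi_\star^{C_2} \S$ along the unit map (e.g., $s \in \{\eta, a, \ldots\}$ and their products). Since $\psi^k$ is the identity on classes from the sphere by naturality, and $\psi^k(u^m) = k^m u^m$ by the Bott step, one obtains $\psi^k(u^m s) = k^m \cdot u^m s = u^m s$ for odd $k$, matching the claim.

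The main obstacle is the careful identification of multiplicative generators and verification that the relevant restriction maps have the claimed form --- specifically, that $\pi_{1+\sigma}^{C_2} \KR \to \pi_2 \KU$ is an isomorphism and that every $2$-torsion class factors through the sphere up to a power of $u$. Both are standard consequences of the Atiyah/Hu--Kriz description of $\pi_\star^{C_2} \KR$, so once that input is in hand the argument is essentially a bookkeeping computation.
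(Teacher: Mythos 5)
Your treatment of $a_\sigma$-type classes via $\S$-linearity and of $\overline{u}$ via restriction to $\KU$ matches the paper's first two steps. But there is a genuine gap in your description of the ring $\pi_\bigstar^{C_2}\KR$, and it causes you to miss an entire family of generators. The torsion-free part of $\pi_\bigstar^{C_2}\KR$ is \emph{not} exhausted by integer multiples of powers of $\overline{u}$: besides $\overline{u}^{\pm 1}$ (in degrees $n(1+\sigma)$) there are the genuinely equivariant classes $u_{2\sigma}^{\pm 2}$ and $v_0(m)=2u_{2\sigma}^m$, which generate free summands in degrees $2m(1-\sigma)$. For such a class one has $a+b=0$, so your claim that it is an integer multiple of $u^{(a+b)/2}=1$ fails for degree reasons alone: $\pi_{4-4\sigma}^{C_2}\KR$ and $\pi_0^{C_2}\KR$ are different graded pieces, and no power of $\overline{u}$ lives in degree $2m(1-\sigma)$ unless $m=0$. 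Your reduction ``every torsion-free class is a multiple of $u^{(a+b)/2}$'' therefore never computes $\psi^k(u_{2\sigma}^2)$ or $\psi^k(v_0(m))$, which is exactly the nontrivial part of the theorem. Similarly, your assertion that every $2$-torsion class factors as $u^m\cdot s$ with $s$ pulled back from $\pi_\bigstar^{C_2}\S$ is unjustified; the torsion in \cref{figure:preleiminaries:homotopyKR} is of the form $u_{2\sigma}^{2i}a_\sigma^{j}\eta^{\epsilon}$ and again involves the classes you have not handled.

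The paper closes this gap with a third technique: using $\KR^{hC_2}\simeq\KO$ and the isomorphisms $\overline{u}^n\colon\pi_{n(1-\sigma)}^{C_2}\KR\xrightarrow{\simeq}\pi_{2n}\KO$, one transports $u_{2\sigma}^2$ and $v_0(m)$ into integer degrees, where Adams' computation of $\psi^k$ on $\pi_\ast\KO$ (multiplication by $k^n$ in degree $2n$) applies; comparing $\psi^k(\overline{u}^4 u_{2\sigma}^2)=k^4\overline{u}^4\psi^k(u_{2\sigma}^2)$ with the $\KO$-answer $k^4\overline{u}^4u_{2\sigma}^2$ and cancelling the invertible $k^4\overline{u}^4$ gives $\psi^k(u_{2\sigma}^2)=u_{2\sigma}^2$, and likewise for $v_0(m)$. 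You would need to add this step (or some substitute for it) for your argument to be complete; the rest of your proposal is consistent with the paper's approach.
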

\begin{introthm}[\cref{thm:TMF:main-thm}]
    Let $k \in \Z$ and $x \in \pi_{a+b\sigma}^{C_2} \CTMF_1(3)[\frac{1}{k}]$, where $\sigma$ is the sign representation of $C_2$.
    Then \[\psi^k(x) =k^{\frac{a + b}{2}}x \in \pi_{a+b\sigma}^{C_2}  \CTMF_1(3)[\frac{1}{k}].\]
    In particular, if $x$ is torsion (necessarily $2$-torsion), then $\psi^k(x)=x$.
\end{introthm}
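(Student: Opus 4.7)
The strategy I would follow mirrors the one emphasised in the introduction for $\KR$: reduce the $RO(C_2)$-graded computation to an integer-graded one on the underlying non-equivariant spectrum $\TMF_1(3)$, where Davies' theorem already guarantees that $\psi^k$ acts as multiplication by $k^n$ on $\pi_{2n}\TMF_1(3)[\tfrac{1}{k}]$. The single equivariant-specific ingredient needed is sufficient control over the difference between $\pi_{a+b\sigma}^{C_2}\CTMF_1(3)$ and $\pi_{a+b}\TMF_1(3)$ after inverting $k$.

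Concretely, I would proceed as follows. First I would use that $\psi^k$ is, by construction, a map of $C_2$-equivariant ring spectra and is therefore natural with respect to restriction to the underlying spectrum, under which the sign representation $\sigma$ collapses to $1$. For any $x \in \pi_{a+b\sigma}^{C_2}\CTMF_1(3)[\tfrac{1}{k}]$ with $a+b$ even, applying Davies' formula to $\operatorname{res}(x) \in \pi_{a+b}\TMF_1(3)[\tfrac{1}{k}]$ gives $\psi^k(\operatorname{res}(x)) = k^{(a+b)/2}\operatorname{res}(x)$, so the class $\psi^k(x) - k^{(a+b)/2} x$ restricts to zero. When $a+b$ is odd, $\pi_{a+b}\TMF_1(3) = 0$ (since $\TMF_1(3)$ is complex-orientable with homotopy $\Z[a_1,a_3][\Delta^{-1}]$ concentrated in even degrees), so $\operatorname{res}(x) = 0$ automatically, and the same conclusion applies to $\psi^k(x) - x$.

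It then remains to show that the kernel of restriction is $2$-torsion and that $\psi^k - k^{(a+b)/2}$ annihilates this kernel; this is the step I expect to be the main obstacle, as it requires a sufficiently explicit grasp of $\pi_\star^{C_2}\CTMF_1(3)$. Following the $\KR$ template, I would identify the kernel via the isotropy-separation cofibre sequence $(E C_2)_+ \to S^0 \to \widetilde{E C_2}$ with the $a_\sigma$-power torsion part and then use the homotopy-fixed-point vs.\ geometric-fixed-point comparison to see that all such contributions are $2$-torsion. Once this is in place, the endgame is immediate: if $k$ is odd then $k^n \equiv 1 \pmod 2$ for every $n$, so both $\psi^k$ and multiplication by $k^{(a+b)/2}$ act as the identity on $2$-torsion; if $k$ is even, inverting $k$ kills all $2$-torsion outright. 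Either way the kernel element vanishes and the asserted identity $\psi^k(x) = k^{(a+b)/2} x$ holds, with the convention that for $a+b$ odd this collapses to $\psi^k(x)=x$ as stated.
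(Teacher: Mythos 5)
There is a genuine gap, and it sits exactly where you predict it will: the treatment of the kernel of restriction. Your first step is sound and corresponds to part of the paper's argument: restriction commutes with $\psi^k$, and the nonequivariant computation (Davies) pins down $\psi^k$ on $\pi_*\TMF_1(3)$; the paper uses this, via the strongly even property, to handle the generators $\overline{a}_1$ and $\overline{a}_3$, whose degrees $n(1+\sigma)$ are ones where restriction is an isomorphism so no kernel issue arises. But for a general $x$ you only learn that $y \coloneqq \psi^k(x)-k^{(a+b)/2}x$ lies in $\ker(\operatorname{res}^{C_2}_e)$, and your endgame does not actually show $y=0$. Two distinct problems occur. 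First, for $x$ itself in the kernel (the $2$-torsion classes), your justification that ``$\psi^k$ acts as the identity on $2$-torsion because $k^n\equiv 1\pmod 2$'' is circular: it presupposes that $\psi^k$ acts by the scalar $k^n$ on those classes, which is precisely the statement to be proven there; a priori a ring endomorphism could act nontrivially on an $\finfld{2}$-vector space. Second, for $x$ generating a free summand, knowing that $y$ lands in a $2$-torsion group does not force $y=0$ when $k$ is odd; you would need to rule out a nonzero torsion discrepancy degree by degree.

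The paper closes this gap by working with ring generators rather than arbitrary elements, using the two techniques your sketch omits. The class $a_\sigma$ comes from the sphere, so $\S$-linearity gives $\psi^k(a_\sigma)=a_\sigma$; the classes $u_{2\sigma}^4$, $v_0(m)$ and $\overline{a}_1(1)$, which live in degrees $n(1-\sigma)$ invisible to the argument via the underlying spectrum, are handled through the homotopy fixed points $\CTMF_1(3)^{hC_2}\simeq \TMF_0(3)$: multiplication by $\overline{a}_1^n$ identifies $\pi^{C_2}_{n(1-\sigma)}$ with the torsion-free group $\pi_{2n}\TMF_0(3)$, where the action of $\psi^k$ is known, and invertibility of $k$ lets one divide. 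Multiplicativity then propagates the formula to every monomial, in particular to all the torsion classes, since these are $a_\sigma$-multiples of the other generators. If you want to salvage your restriction-based outline, you must supply exactly this input (or an equivalent one) for the $a_\sigma$-local and $(1-\sigma)$-graded part of $\pi_\bigstar^{C_2}\CTMF_1(3)$; isotropy separation alone identifies the kernel but does not compute $\psi^k$ on it.
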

The computations build on the fact that stable Adams operations are maps of $\E_\infty$-rings, and it therefore suffices to compute them on the generators of the homotopy groups.
There are three different kinds of generator, and for each we use a different technique.
If the generator is
\begin{itemize}
    \item coming from the sphere, we use $\S$-linearity.
    \item coming from the underlying nonequivariant spectrum, use the underlying computations.
    \item genuinely equivariant, use the homotopy fixed point spectra.
\end{itemize}

In \cref{sec:recollection}, we will review relevant results about the computations of the $RO(C_2)$-graded homotopy groups of $\KR$ and $\CTMF_1(3)$
as well as the construction of stable Adams operations.
The main theorems are proven in \cref{sec:main theorems}.
We will first deal with $K$-theory and then analogously handle topological modular forms.

\subsection*{Acknowledgments} 
The results of this article are part of my Masters' thesis at the University of Bonn.
I thank my advisor Jack Davies for suggesting the topic, answering countless questions and giving invaluable advice on a draft of this paper.
I am especially grateful to John Greenlees for sharing his LaTeX code for \cref{figure:preleiminaries:homotopyKR,figure:preleiminaries:homotopyTMF13}.
I also thank Emma Brink, Lennart Meier and Lucas Piessevaux for many helpful discussions about $\TMF$, and Tom Bachmann and Lucas Piessevaux for reading drafts of this article at various stages.
The author acknowledges support by the Deutsche Forschungsgemeinschaft
(DFG, German Research Foundation) through the Collaborative Research Centre TRR 326 
\textit{Geometry and Arithmetic of Uniformized Structures}, project number 444845124.

\section{Recollections} \label{sec:recollection}
The purpose of this section is to recall the necessary information about the homotopy groups of the $C_2$-spectra of Real $K$-theory and topological modular forms with $\Gamma_1(3)$ level structure.
We will only give the parts needed for the computations in \cref{sec:main theorems} and references for more details.
We also quickly recall the existence of stable Adams operations on the relevant spectra.

Let $\sigma$ be the sign representation of $C_2$.
Recall that the representation ring is $RO(C_2) \simeq \Z[\sigma]/(\sigma^2 - 1)$.
The $RO(C_2)$-graded homotopy groups of interest will have two generators which have a truly equivariant nature.
The first one is the Euler class of $\sigma$, which is denoted as $a_\sigma$, cf.\ \cite[Definition 3.11]{HillHopkinsRavenel:KervaireInvariantNonExistence}.
Note that this class $a_\sigma$ corresponds to the inclusion $S^0 \to S^\sigma$ and lives in degree $-\sigma$.
The second class is the Euler-like class $u_{2\sigma}$ living in degree $2(1-\sigma)$, see \cite[Definition 3.12]{HillHopkinsRavenel:KervaireInvariantNonExistence}.

\subsection*{Real $K$-theory} \label{sec:recollection:K}
Let us denote the genuine $C_2$-spectrum of Real $K$-theory of \cite{Atiyah:KtheoryReality} by $\KR$.
Moreover, we denote the Bott element by $u \in \pi_2 \KU$.
This has an equivariant lift $\overline{u} \in \pi^{C_2}_{1+ \sigma} \KR$, i.e., the restriction morphism $\text{res}^{C_2}_e \colon \pi^{C_2}_{1+\sigma} \KR \to \pi^e_{2} \KU$ sends $\overline{u}$ to $u$.
Computations of e.g. \cite{Dugger:AtHiSpSeqforKR,BrunerGreenlees:ConnectiveRealK,Greenlees:4reality} show, that the $RO(C_2)$-graded homotopy groups $\pi_\bigstar^{C_2} \KR$ are generated by the elements:
\begin{equation}\label{eq:generators:KR}
   a_\sigma \textup{, } \hspace{10pt} u^{\pm 2}_{2\sigma} \textup{, } \hspace{10pt} \overline{u} \hspace{10pt} \textup{ and  } \hspace{10pt} v_0(m) \coloneqq 2u^m_{2\sigma} \textup{ for } m \in \Z
\end{equation}
such that $2v_0(2)= 4 u^2_{2\sigma}$.
Note that this notation is from the homotopy fixed point spectra sequence, and is not meant to imply that this is a product on $\pi_\bigstar$.
The generators of \cref{eq:generators:KR} are subject to several relations which we do not include here, as they are not relevant for our computations.
Instead, we depict the homotopy groups of $\kR$, which give these of $\KR$ by inverting $\overline{u}$, in \cref{figure:preleiminaries:homotopyKR}.
This figure was taken from \cite{Greenlees:4reality}, but the notation was adapted to ours.
\begin{figure}[h]
    \begin{center}
        \begin{tikzpicture}[scale =0.8]
            \small
                \clip (-5, -5.5) rectangle (5, 6);
                \draw[step=0.5, gray, very thin] (-9,-9) grid (9, 9);

                \draw[blue, ->, thick](1.6,0)--(4.8, 0);
                \draw (2.9, 0) node[anchor=north]{$\Z\cdot 1$};

                \draw[blue, ->, thick](0,1.65)--(0, 5.4);
                \draw (0,4.5) node[anchor=east]{$\Z\cdot \sigma$};

                \draw (-2.5, -3.5) node[anchor=east, draw=orange]{\Large{$\pi^{C_2}_\bigstar \kR$}};

                \draw [->] (0,0)-- (8, 8);
                \node at (0,0)  [shape = rectangle, draw]{};
                \draw (0,0) node[anchor=east]{1};

                \draw [->](1,-1)-- (9, 7);
                \node at (1,-1) [shape=circle, draw] {};
                \draw (1,-1) node[anchor=north]{$v_0(1)$};
                \draw (-1,1) node[anchor=east]{$v_0(-1)$};

                \draw[purple] (0,0) -- (0,-8);

                \draw[red] (0,-0.5)-- (8, 7.5);
                \draw[red] (0,-1)-- (8, 7);
                \foreach \y in {1, 2, 3,4,5,6,7,8,9,10,11,12,13,14,15}
                \draw (0,-\y/2) node[anchor=east] {$a^{\y}_\sigma$};
                \foreach \y in {1,2,3,4,5,6,7,8,9,10,11,12,13,14,15}
                \node at (0,-\y/2) [fill=red, inner sep=1pt, shape=circle, draw] {};


                \draw [->] (2,-2)-- (8, 4);
                \node at (2,-2)  [shape = rectangle, draw]{};
                \draw (2,-2) node[anchor=east]{$u^2_{2\sigma}$};
                \draw (-2,2) node[anchor=east]{$2u^{-2}_{2\sigma}$};

                \draw [->](3,-3)-- (9, 3);
                \node at (3,-3) [shape=circle, draw] {};
                \draw (3,-3) node[anchor=north]{$v_0(3)$};
                \draw (-3,3) node[anchor=east]{$v_0(-3)$};

                \draw[purple] (2,-2) -- (2,-8);

                \draw[red] (2,-2.5)-- (7, 2.5);
                \draw[red] (2,-3)-- (7, 2);
                \foreach \y in {1,2,3,4,5,6,7,8,9,10,11,12,13,14,15}
                \node at (2,-2-\y/2) [fill=red, inner sep=1pt, shape=circle, draw] {};


                \draw [->] (4,-4)-- (8, 0);
                \node at (4,-4)  [shape = rectangle, draw]{};
                \draw (4,-4) node[anchor=east]{$u^4_{2\sigma}$};
                \draw (-4,4) node[anchor=east]{$2u^{-4}_{2\sigma}$};

                \node at (5,-5) [shape=circle, draw] {};

                \draw[purple] (4,-4) -- (4,-8);

                \draw[red] (4,-4.5)-- (7, -1.5);
                \draw[red] (4,-5)-- (7, -2);
                \foreach \y in {1,2,3,4,5,6,7,8,9,10,11,12,13,14,15}
                \node at (4,-4-\y/2) [fill=red, inner sep=1pt, shape=circle, draw] {};



                \draw (-2,2)-- (5, 9);
                \draw (4,8) node[anchor=east]{$(2, \overline{v}_1)P$};
                \node at (-2,2) [shape=circle, draw] {};
                \node at (-1.5,2.5) [shape=rectangle, draw] {};

                \draw (-1,1)-- (7, 9);
                \node at (-1,1) [shape=circle, draw] {};

                \foreach \y in {0, 1,2,3,4,5,6,7,8}
                \node at (-2.5,2.5+\y/2) [fill=red, inner sep=1pt, shape=circle, draw]
                {};
                \draw[red] (-2.5, 2.5) --(-2.5, 8);

                \node at (-1.5,1.5) [fill=red, inner sep=1pt, shape=circle, draw] {};
                \draw[red] (-1.5,1.5)-- (5, 8);
                \node at (-1.5,2.0) [fill=red, inner sep=1pt, shape=circle, draw] {};
                \draw[red] (-1.5,2.0)-- (5, 8.5);

                \draw (-4,4)-- (5, 13);
                \node at (-4,4) [shape=circle, draw] {};
                \node at (-3.5,4.5) [shape=rectangle, draw] {};

                \draw (-3,3)-- (7, 13);
                \node at (-3,3) [shape=circle, draw] {};

                \foreach \y in {0, 1,2,3,4,5,6,7,8}
                \node at (-4.5,4.5+\y/2) [fill=red, inner sep=1pt, shape=circle, draw]
                {};
                \draw[red] (-4.5, 4.5) --(-4.5, 8);

                \node at (-3.5,4) [fill=red, inner sep=1pt, shape=circle, draw] {};
                \draw[red] (-3.5,4)-- (5, 12.5);
                \node at (-3.5,3.5) [fill=red, inner sep=1pt, shape=circle, draw] {};
                \draw[red] (-3.5,3.5)-- (5, 12);

                \node at (0.5,0.5)  [shape = rectangle, draw]{};
                \draw (0.5,0.5) node[anchor=east]{$\overline{u}$};

        \end{tikzpicture}
        \caption{Black circles, squares and lines represent copies of $\Z$, red dots and lines represent copies of $\finfld{2}$. For more details we refer to \cite[§3]{Greenlees:4reality}.}
        \label{figure:preleiminaries:homotopyKR}
    \end{center}
\end{figure}

Nonequivariantly, stable Adams operations on the spectra $\KU$ of complex and $\KO$ of real topological $K$-theory were first introduced and completely computed by Adams \cite{Adams:VectorFieldsonSpheres}.
By \cite[Example 0.0.4]{EllipticII} the $C_2$-action on $\KU$ is given by $[-1] \colon \hat{\G}_m \to \hat{\G}_m$ acting on the canonical orientation of $\hat{\G}_m$ over $\KU$.
Hence, by the discussion in \cite[§6.4]{Davies:OnLuriesTheorem}, this yields $C_2$-equivariant stable Adams operations $\psi^k$ on $\KR[\frac{1}{k}]$ for $k \in \Z$, see also \cite[Remark 6.7]{DaviesLinskens:Global-Tate-K}.

\subsection*{Topological modular forms of level structure $\Gamma_1(3)$}
We denote the Borel $C_2$-spectrum of \cite[Theorem 2.4]{HillMeier:C2Tmf13} of topological modular form with $\Gamma_1(3)$ level structure by $\CTMF_1(3)$.
Its underlying nonequivariant spectrum is denoted by $\TMF_1(3)$ with homotopy groups $\pi_\ast \TMF_1(3) \cong \Z[\frac{1}{3}][a_1,a_3, \Delta^{-1}]$ \cite[Corollary 3.3]{MahowaldRezk:TMFofLevel3}.
Here $|a_1| =2$, $|a_3| = 6$ and $\Delta =a_3^3(a_1^3-27a_3)$.
The equivariant lifts are denoted by $\overline{a}_1$ and $\overline{a}_3$ and lie in degree $1+ \sigma$ and $3 + 3\sigma$, respectively.
Moreover, Hill and Meier \cite[§4.3]{HillMeier:C2Tmf13} show that as $C_2$-spectra $\Ctmf_1(3)[\overline{\Delta}^{-1}] \cong \CTMF_1(3)$, where
$\overline{\Delta} = \overline{a}_3^3(\overline{a}_1^3 - 27 \overline{a}_3)$ is the equivariant lift of $\Delta$.

Combining \cite[Theorem 4.15]{HillMeier:C2Tmf13} and \cite[§13.B]{GreenleesMeier:Gorenstein} one obtains that $\pi^{C_2}_\bigstar \CTMF_1(3)$ is generated by the following classes:
\begin{equation} \label{eq:generators:TMF}
   a_\sigma \textup{, } \hspace{10pt} \overline{a}_1 \textup{, } \hspace{10pt}  \overline{a}_3 \textup{, } \hspace{10pt} u^4_{2\sigma} \textup{, } \hspace{10pt} \overline{a}_1(1) \coloneqq \overline{a}_1u^{2}_{2\sigma}  \textup{ and } \hspace{10pt} v_0(m) \coloneqq 2 u^m_{2\sigma} \textup{ for }  m \in \Z.
\end{equation}
The same remarks as in the $K$-theory case hold.
A picture of the homotopy groups of $\Ctmf_1(3)$, which give these of $\CTMF_1(3)$ by inverting $\overline{\Delta}$, is \cref{figure:preleiminaries:homotopyTMF13}, which is taken from \cite[§13.B]{GreenleesMeier:Gorenstein}.
\begin{figure}[]
    \begin{center}
        \begin{tikzpicture}[scale =1.6]
            \small
            \clip (-3.5, -3.5) rectangle (3.5, 3.5);
            \draw[step=0.25, gray, very thin] (-7,-6) grid (9, 9);
            \draw (-1.25,-1.0) node[anchor=east, draw=orange]{\Large{$\pi^{C_2}_{\bigstar}\Ctmf_1(3)$}};
            \foreach \n in {-7,-6,-5, -4, -3,-2,-1,0,1,2,3,4}
            \draw [->] (\n,-\n)-- (\n +8,8-\n);
            \foreach \n in {0,1,2,3,4}
            \node at (\n,-\n)  [shape = rectangle, draw]{};
            \draw (0,0) node[anchor=east]{$1$};
            \draw (1,-1) node[anchor=east]{$u^{4}_{2\sigma}$};
            \draw (2,-2) node[anchor=east]{$u^{8}_{2\sigma}$};
            \draw (3,-3) node[anchor=east]{$u^{12}_{2\sigma}$};
            \draw (4,-4) node[anchor=east]{$u^{16}_{2\sigma}$};

            \foreach \n in {-7,-6,-5, -4, -3,-2,-1}
            \node at (\n,-\n)  [shape = circle, draw]{};
            \draw (-1,1) node[anchor=east]{$2\cdot u_{2\sigma}^{-4}$};
            \draw (-2,2) node[anchor=east]{$2\cdot u_{2\sigma}^{-8}$};
            \draw (-3,3) node[anchor=east]{$2\cdot u_{2\sigma}^{-12}$};

            \foreach \n in {-7,-6,-5, -4, -3,-2,-1,0,1,2,3,4}
            \draw [->] (\n+0.5,-\n-0.5)-- (\n  +8.5,7.5-\n);
            \foreach \n in {-7,-6,-5, -4, -3,-2,-1,0,1,2,3,4}
            \node at (\n+0.5,-\n-0.5) [fill=black, inner sep=1pt, shape=circle, draw] {};
            \foreach \n in {-7,-6,-5, -4, -3,-2,-1,0,1,2,3,4}
            \draw [->] (\n+0.28,-\n-0.28)-- (\n  +8.28,7.72-\n);
            \foreach \n in {-7,-6,-5, -4, -3,-2,-1,0,1,2,3,4}
            \node at (\n+0.28,-\n-0.28) [inner sep=1pt, shape=circle, draw] {};
            \foreach \n in {-7,-6,-5, -4, -3,-2,-1,0,1,2,3,4}
            \draw [->] (\n+0.75,-\n-0.75)-- (\n  +8.75,7.25-\n);
            \foreach \n in {-7,-6,-5, -4, -3,-2,-1,0,1,2,3,4}
            \node at (\n+0.75,-\n-0.75) [inner sep=1pt, shape=circle, draw] {};

            \foreach \n in {0,...,4}
            \draw [red] (\n,-\n-0.125)-- (\n +8,7.875-\n);
            \foreach \n in {0,...,4}
            \draw [red] (\n,-\n-0.25)-- (\n +8,7.75-\n);
            \foreach \n in {0,...,4}
            \foreach \y in {1,...,40}
            \node at (\n,-\n-\y/8) [fill=red, inner sep=1pt, shape=circle,
            draw]{};

            \foreach \n in {-7,...,-1}
            \foreach \y in {1,...,40}
            \node at (\n-0.125,-\n+\y/8) [fill=red, inner sep=1pt, shape=circle, draw]{};

            \foreach \n in {-7,-6,-5, -4, -3,-2,-1}
            \draw [red] (\n+0.125,-\n)-- (\n +8.125,8-\n);
            \foreach \n in {-7,-6,-5, -4, -3,-2,-1}
            \node at (\n+0.125,-\n) [fill=red, inner sep=1pt, shape=circle, draw] {};
            \foreach \n in {-7,-6,-5, -4, -3,-2,-1}
            \draw [red] (\n+0.125, -\n-0.125)-- (\n +8.125,7.875-\n);
            \foreach \n in {-7,-6,-5, -4, -3,-2,-1}
            \node at (\n+0.125,-\n-0.125) [fill=red, inner sep=1pt, shape=circle, draw] {};
            \foreach \n in {0,1,2,3,4}
            \draw [green] (\n,-\n-0.375)-- (\n +8,7.625-\n);
            \foreach \n in {0,1,2,3,4}
            \node at (\n,-\n-0.375) [fill=red, inner sep=1pt, shape=circle, draw] {};
            \foreach \n in {0,1,2,3,4}
            \draw [green] (\n,-\n-0.5)-- (\n +8,7.5-\n);
            \foreach \n in {0,1,2,3,4}
            \node at (\n,-\n-0.5) [fill=red, inner sep=1pt, shape=circle, draw] {};
            \foreach \n in {0,1,2,3,4}
            \draw [green] (\n,-\n-0.625)-- (\n +8,7.375-\n);
            \foreach \n in {0,1,2,3,4}
            \node at (\n,-\n-0.625) [fill=red, inner sep=1pt, shape=circle, draw] {};
            \foreach \n in {0,1,2,3,4}
            \draw [green] (\n,-\n-0.75)-- (\n +8,7.25-\n);
            \foreach \n in {0,1,2,3,4}
            \node at (\n,-\n-0.75) [fill=red, inner sep=1pt, shape=circle, draw] {};

            \foreach \n in {-7,-6,-5, -4, -3,-2,-1}
            \draw [green] (\n+0.375, -\n)-- (\n +8.375, 8-\n);
            \foreach \n in {-7,-6,-5, -4, -3,-2,-1}
            \node at (\n+0.375, -\n) [fill=red, inner sep=1pt, shape=circle, draw] {};
            \foreach \n in {-7,-6,-5, -4, -3,-2,-1}
            \draw [green] (\n+0.375,-\n-0.125)-- (\n +8.375,7.875-\n);
            \foreach \n in {-7,-6,-5, -4, -3,-2,-1}
            \node at (\n+0.375,-\n-0.125) [fill=red, inner sep=1pt, shape=circle, draw] {};
            \foreach \n in {-7,-6,-5, -4, -3,-2,-1}
            \draw [green] (\n+0.375,-\n-0.25)-- (\n +8.375,7.75-\n);
            \foreach \n in {-7,-6,-5, -4, -3,-2,-1}
            \node at (\n+0.375,-\n-0.25) [fill=red, inner sep=1pt, shape=circle, draw] {};
            \foreach \n in {-7,-6,-5, -4, -3,-2,-1}
            \draw [green] (\n+0.375,-\n-0.375)-- (\n +8.375,7.625-\n);
            \foreach \n in {-7,-6,-5, -4, -3,-2,-1}
            \node at (\n+0.375,-\n-0.375) [fill=red, inner sep=1pt, shape=circle, draw] {};

            \foreach \n in {0,1,2,3,4}
            \draw [red] (\n+0.625,-\n-0.5)-- (\n +8.625,7.5-\n);
            \foreach \n in {0,1,2,3,4}
            \node at (\n+0.625,-\n-0.5) [fill=red, inner sep=1pt, shape=circle, draw] {};
            \foreach \n in {0,1,2,3,4}
            \draw [red] (\n+0.625,-\n-0.625)-- (\n +8.625,7.375-\n);
            \foreach \n in {0,1,2,3,4}
            \node at (\n+0.625,-\n-0.625) [fill=red, inner sep=1pt, shape=circle, draw] {};

            \foreach \n in {-7,-6,-5, -4, -3,-2,-1}
            \draw [red] (\n+0.625,-\n-0.5)-- (\n +8.375,7.5-\n);
            \foreach \n in {-7,-6,-5, -4, -3,-2,-1}
            \node at (\n+0.625,-\n-0.5) [fill=red, inner sep=1pt, shape=circle, draw] {};
            \foreach \n in {-7,-6,-5, -4, -3,-2,-1}
            \draw [red] (\n+0.625, -\n-0.625)-- (\n +8.375,7.375-\n);
            \foreach \n in {-7,-6,-5, -4, -3,-2,-1}
            \node at (\n+0.625,-\n-0.625) [fill=red, inner sep=1pt, shape=circle, draw] {};

            \node at (0.125,0.125)  [shape = rectangle, draw]{};
            \draw (0.125,0.125) node[anchor=south]{$\overline{a}_1$};
            \node at (0.375,0.375)  [shape = rectangle, draw]{};
            \draw (0.375,0.375) node[anchor=south]{$\overline{a}_3$};

        \end{tikzpicture}
        \caption{Black circles, squares and lines represent copies of $\Z$; red dots represent copies of $\finfld{2}$, red lines mean a copy of $\finfld{2}[\overline{a}_1, \overline{a}_3]$ and green lines mean a copy of $\finfld{2}[\overline{a}_3]$. For more details we refer to \cite[§13.B]{GreenleesMeier:Gorenstein}. The axes are the same as in \cref{figure:preleiminaries:homotopyKR}}
        \label{figure:preleiminaries:homotopyTMF13}
    \end{center}
\end{figure}

The $C_2$-spectrum $\CTMF_1(3)$ has $C_2$-equivariant stable Adams operations by \cite[Theorem C and Proposition 2.15]{Davies:Hecke} and the remarks directly after these.
Let us make this more precise.
By \cite[Theorem A]{Davies:Hecke}, there is a functor $\Ot \colon \op{\Isog} \to \CAlg{\Sp}$.
Here, $\Isog$ is the $2$-category whose objects are stacks $X$ equipped with an étale map to the moduli stack of elliptic curves $\Me$, which classifies an elliptic curve $E$ over $X$, and morphisms are pairs $(f,\varphi)$ of a map of stacks $f\colon X' \to X$ and an isogeny of elliptic curves $\varphi \colon E' \to f^\ast E$ of invertible degree. 
The $\E_\infty$-ring $\TMF_1(3)$ is the image of the moduli stack $\MT$ of elliptic curves with $\Gamma_1(3)$-level structure \cite[§2]{MahowaldRezk:TMFofLevel3} with $C_2$-action given by the automorphism $(id, [-1])$ of $(\MT, E_1(3))$, where $E_1(3)$ is the universal elliptic curve living over $\MT$.
By \cite[Diagram (2.2)]{Davies:Hecke}, we define $\psi^n$ on $\TMF_1(3)[\tfrac{1}{n}]$ using the endomorphism $(id, [n])$ of $(M_1(3), E_1(3))$ and applying $\Ot$. 
In the $2$-category $\Isog$, there is a natural $2$-morphism between $(id, [n]) \circ (id, [-1]) \simeq (id, [-1]) \circ (id, [n])$ which shows that the maps $(id, [n])$ are $C_2$-equivariant in $\Isog$. 
Applying $\Ot$ then shows that the Adams operations $\psi^n$ on $\TMF_1(3)[\tfrac{1}{n}]$ are $C_2$-equivariant in $\CAlg{\Sp}$.

\section{Computations of Adams operations} \label{sec:main theorems}
In this section, we compute the stable Adams operations on the $RO(C_2)$-graded homotopy groups of $\KR$ and $\CTMF_1(3)$.

\subsection*{Real $K$-theory}
Here we compute the effect of stable Adams operations on the $RO(C_2)$-graded homotopy groups of the $C_2$-spectrum $\KR$.
There is a second $C_2$-spectrum related to topological complex $K$-theory, which is denoted by $\KU_{C_2}$ and defined by Segal \cite{Segal:EquivariantKTheory}.
Note that $\KR \neq \KU_{C_2}$, but there is a map $\KU_{C_2} \to \KR$.
Balderrama \cite[Lemma 2.2.2]{Balderrama:KTheoryEquivariant} computed the stable Adams operations for $\KU_{C_2}$, and even for more general $\KU_G$.
This computation and the above map presumably could be used to recover the next result, but we will use the same techniques as for \cref{thm:TMF:main-thm}.

\begin{thm}\label{thm:KR:main-thm}
    Let $k\in \Z$ and $x \in \pi_{a+b\sigma}^{C_2} \KR[\frac{1}{k}]$.
    Then $\psi^k(x) =k^{\frac{a + b}{2}}x \in \pi_{a+b\sigma}^{C_2} \KR[\frac{1}{k}]$.
    In particular, if $x$ is torsion (necessarily $2$-torsion), then $\psi^k(x)=x$.
\end{thm}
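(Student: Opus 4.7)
Since $\psi^k$ is a map of $\E_\infty$-ring spectra, the induced endomorphism of $\pi^{C_2}_\bigstar \KR[\frac{1}{k}]$ is multiplicative. The candidate formula $x \mapsto k^{(a+b)/2} x$ is itself multiplicative: for $x$ in degree $a + b\sigma$ and $y$ in degree $c+d\sigma$ the exponents add to $((a+c)+(b+d))/2$, the correct value for $xy$. It thus suffices to check the formula on the generators listed in \eqref{eq:generators:KR}, namely $a_\sigma$, $\overline{u}$, $u^{\pm 2}_{2\sigma}$ and $v_0(m)$ for $m \in \Z$, handling each via the corresponding technique outlined in \cref{sec:introduction}.

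The Euler class $a_\sigma$ is pulled back from $\pi^{C_2}_{-\sigma} \S$ along the unit map $\S \to \KR$; since $\psi^k$ is $\S$-linear, it fixes $a_\sigma$, which is consistent with the torsion clause since $a_\sigma$ is $2$-torsion. For $\overline{u}$, the restriction $\text{res}^{C_2}_e \colon \pi^{C_2}_{1+\sigma}\KR \to \pi_2 \KU$ is an isomorphism $\Z \to \Z$ sending $\overline{u}$ to the Bott element $u$ (see \cref{figure:preleiminaries:homotopyKR}), and naturality of $\psi^k$ under restriction together with Adams's classical formula $\psi^k(u) = k u$ then yields $\psi^k(\overline{u}) = k \overline{u}$, matching $(a+b)/2 = 1$.

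For the genuinely equivariant generators $u^{\pm 2}_{2\sigma}$ and $v_0(m)$ with odd $m$, I would combine two observations. First, since $u_{2\sigma}$ is an Euler-like class in the sense of \cite[Definition 3.12]{HillHopkinsRavenel:KervaireInvariantNonExistence}, the restriction $\text{res}^{C_2}_e(u^{\pm 2}_{2\sigma}) \in \pi^e_0 \KU = \Z$ is a unit, and $\text{res}^{C_2}_e(v_0(m)) = 2$; on these restrictions $\psi^k$ acts as the identity since it fixes $\pi_0 \KU$. Second, via the $RO(C_2)$-graded HFPSS $H^s(C_2; \pi_\bullet \KU) \Rightarrow \pi^{C_2}_\bigstar \KR$, on which $\psi^k$ acts by naturality through its classical action by $k^n$ on $\pi_{2n}\KU$, the generators $u^{\pm 2}_{2\sigma}$ and $v_0(m)$ (odd $m$) are detected in filtration $0$ by the units and $2$ above, each fixed by $\psi^k$; any higher-filtration contributions to $\pi^{C_2}_{a+b\sigma} \KR[\frac{1}{k}]$ in these bidegrees are $2$-torsion, so they either vanish (if $k$ is even) or are fixed by $\psi^k$ (for odd $k$, since $k^n \equiv 1 \pmod 2$). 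This gives $\psi^k(u^{\pm 2}_{2\sigma}) = u^{\pm 2}_{2\sigma}$ and $\psi^k(v_0(m)) = v_0(m)$, matching $(a+b)/2 = 0$.

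The main obstacle will be the HFPSS bookkeeping for the genuinely equivariant step: identifying precisely how $u^{\pm 2}_{2\sigma}$ and $v_0(m)$ are detected on the $E_\infty$-page and verifying that no hidden extensions introduce nonzero corrections outside of the controlled $2$-torsion. Granted these identifications, the final ``in particular'' torsion clause follows immediately, as every $2$-torsion element of $\pi^{C_2}_\bigstar \KR$ factors as an $a_\sigma$-multiple of the above generators, on each of which $\psi^k$ acts by a power of $k$ reducing to a unit in $\finfld{2}$.
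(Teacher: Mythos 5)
Your reduction to the generators of \cref{eq:generators:KR}, and your treatment of $a_\sigma$ (via $\S$-linearity of $\psi^k$) and of $\overline{u}$ (via naturality of $\psi^k$ under $\text{res}^{C_2}_e$ and Adams's formula on $\pi_2\KU$), coincide with the paper's proof. For the genuinely equivariant generators $u^{\pm 2}_{2\sigma}$ and $v_0(m)$ you take a genuinely different route: you detect these classes through the edge homomorphism to $\pi_0\KU$ and the $RO(C_2)$-graded homotopy fixed point spectral sequence, whereas the paper multiplies into integral degrees, uses that $\overline{u}^n\cdot(-)\colon \pi^{C_2}_{n(1-\sigma)}\KR \to \pi^{C_2}_{2n}\KR = \pi_{2n}\KO$ is an isomorphism (\cref{eq:computations:KR:HFP}), applies Adams's computation $\psi^k=k^n$ on $\pi_{2n}\KO$, and cancels the factor $k^{2n}$ contributed by $\psi^k(\overline{u}^n)$. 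The paper's version lands in a torsion-free group in one step and needs no filtration bookkeeping; yours is more self-contained (it never invokes $\KR^{hC_2}\simeq\KO$) but pushes the work into the $E_\infty$-page analysis you flag at the end.

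That analysis contains a genuine gap in the odd-$k$ case. Detection in filtration $0$ by a $\psi^k$-fixed class only gives that $\psi^k(x)-x$ lies in positive filtration, and knowing that the positive-filtration subgroup is ``fixed by $\psi^k$'' is a statement about the associated graded: it does not exclude $\psi^k(x)=x+t$ for a nonzero $2$-torsion $t$ of higher filtration, which is exactly the conclusion you need to rule out. As written, that step is a non sequitur. It is repairable, and more cheaply than you suggest: in the degrees $2j(1-\sigma)$ where $u^{\pm2}_{2\sigma}$ and $v_0(m)$ live, \cref{figure:preleiminaries:homotopyKR} (after inverting $\overline{u}$) shows $\pi^{C_2}_{2j(1-\sigma)}\KR\cong\Z$, concentrated in filtration $0$; hence $\text{res}^{C_2}_e$ is injective there and the error term has nowhere to live. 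You should argue this way instead of via the torsion dichotomy. You should also justify, rather than assert, that $\text{res}^{C_2}_e(u_{2\sigma}^{\pm2})$ is a unit in $\pi_0\KU$ --- this is the standard normalization of the Euler-like class of \cite[Definition 3.12]{HillHopkinsRavenel:KervaireInvariantNonExistence}, but it carries real weight in your argument. With those two repairs your proof is correct.
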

\begin{proof}
    We will implicitly invert $k$ everywhere in the proof.
    As stable Adams operations are multiplicative and additive it suffices to show the statement for the generators
    $a_\sigma$, $\overline{u}$, $u^2_{2\sigma}$ and $v_0(m)$ of \cref{eq:generators:KR}.
    
    Recall that $a_\sigma \in \pi_{-\sigma}^{C_2} \KR$ is the image of $a_\sigma \in \pi_{-\sigma}^{C_2} \S$ under the unit
    $\S \to \KR$ and that the stable Adams operations $\psi^k$ are $\S$-linear with $\psi^k(1) = 1$, meaning they commute with the unit: \[\psi^k(a_\sigma) = a_\sigma\psi^k(1) = a_\sigma.\]

    The $C_2$-spectrum $\KR$ is strongly even by \cite[Examples 4.13]{GreenleesMeier:Gorenstein}, i.e.\, the morphism
    $\text{res}^{C_2}_e \colon \pi^{C_2}_{n(1+\sigma)} \KR  \to \pi^e_{2n}\KR =  \pi_{2n} \KU$ is an isomorphism for all $n \in \Z$.
    Since restriction maps commute with stable Adams operations by naturality, the result for $\overline{u}$ follows by the classical computation in $\KU$, cf.\ \cite[Corollary 5.2]{Adams:VectorFieldsonSpheres}.
    
    Lastly, we compute the Adams operations for the classes $u^2_{2\sigma}$ and $v_0(n)$.
    For this we use the well known equivalence $\KR^{hC_2} \simeq \KO$, see \cite{Atiyah:KtheoryReality}.
    By \cref{figure:preleiminaries:homotopyKR}, we have isomorphisms
    \begin{equation} \label{eq:computations:KR:HFP}
        \overline{u}^n \colon \pi_{n(1-\sigma)}^{C_2} \KR \xrightarrow{\simeq} \pi_{2n}^{C_2}\KR = \pi_{2n} \KO
    \end{equation} 
    given by multiplication.
    By \cite[Corollary 5.2]{Adams:VectorFieldsonSpheres}, the $\psi^k$-action on $\pi_\ast \KO$ is given by multiplication by $k^n$ in degree $2n$.
    Now compute \[k^4 \overline{u}^4 u^2_{2\sigma} = \psi^k(\overline{u}^4 u^2_{2\sigma}) = \psi^k(\overline{u}^4) \psi^k(u^2_{2\sigma}) = k^4 \overline{u}^4 \cdot \psi^k(u^2_{2\sigma}) \in \pi^{C_2}_8 \KR \simeq \Z .\]
    Since $k$ is invertible and \cref{eq:computations:KR:HFP} is an isomorphism, this implies that $u_{2\sigma}^2 = \psi^k(u_{2\sigma}^2)$.
    The same argument applied to $\overline{u}^{2m} \cdot v_0(m)$ also shows that $\psi^k(v_0(m)) = v_0(m)$, as desired.
\end{proof}

\subsection*{Topological modular forms of level structure $\Gamma_1(3)$}
The strategy to compute the stable Adams operations on $\pi^{C_2}_\bigstar \CTMF_1(3)$ is precisely the same as for Real $K$-theory.

\begin{thm}\label{thm:TMF:main-thm}
    Let $k\in \Z$ and $x \in \pi_{a+b\sigma}^{C_2} \CTMF_1(3)[\frac{1}{k}]$.
    Then $\psi^k(x) =k^{\frac{a + b}{2}}x \in \pi_{a+b\sigma}^{C_2} \CTMF_1(3)[\frac{1}{k}]$.
    In particular, if $x$ is torsion (necessarily $2$-torsion), then $\psi^k(x)=x$.
\end{thm}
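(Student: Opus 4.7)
The proof follows the same three-step strategy as \cref{thm:KR:main-thm}. After implicitly inverting $k$ and using that $\psi^k$ is a map of $\E_\infty$-rings, hence multiplicative and additive, it suffices to verify the formula on each generator in \cref{eq:generators:TMF}; these fall neatly into the three patterns described in the introduction.

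For $a_\sigma$, the $C_2$-equivariant unit $\S \to \CTMF_1(3)$ and $\S$-linearity of $\psi^k$ immediately give $\psi^k(a_\sigma) = a_\sigma$, exactly as in the $\KR$ case. For the underlying generators $\overline{a}_1$ and $\overline{a}_3$, the plan is to use that $\CTMF_1(3)$ is strongly even, which provides restriction isomorphisms $\pi^{C_2}_{n(1+\sigma)} \CTMF_1(3) \xrightarrow{\sim} \pi_{2n} \TMF_1(3)$ commuting with $\psi^k$ by naturality. The nonequivariant stable Adams operations from \cite{Davies:Hecke} are induced by the $[k]$-isogeny on the universal elliptic curve $E_1(3)$, which rescales the Weierstrass coefficient $a_n$ of degree $2n$ by a factor of $k^n$. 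This yields $\psi^k(\overline{a}_1) = k \overline{a}_1$ and $\psi^k(\overline{a}_3) = k^3 \overline{a}_3$, matching the predicted $k^{(a+b)/2}$.

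For the genuinely equivariant generators $u^4_{2\sigma}$, $\overline{a}_1(1)$, and $v_0(m)$, the plan is the analogue of \cref{eq:computations:KR:HFP}: multiply each by a suitable power of $\overline{a}_1$ so that the product lies in a purely integer-graded piece $\pi^{C_2}_n \CTMF_1(3) \cong \pi_n \CTMF_1(3)^{hC_2}$, using Borel completeness of $\CTMF_1(3)$ to identify genuine and homotopy $C_2$-fixed points. The fixed-point spectrum is a form of $\TMF_0(3)$ obtained by applying $\Ot$ of \cite{Davies:Hecke} to the moduli stack $\topos M_0(3)$, and its stable Adams operation acts by $k^n$ on $\pi_{2n}$. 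Concretely, $\overline{a}_1^8 \cdot u^4_{2\sigma}$ lands in $\pi^{C_2}_{16}$, the product $\overline{a}_1^3 \cdot \overline{a}_1(1) = \overline{a}_1^4 u^2_{2\sigma}$ lands in $\pi^{C_2}_8$, and $\overline{a}_1^{2m} \cdot v_0(m)$ lands in $\pi^{C_2}_{4m}$ (with an analogous argument for $m \le 0$). In each case, combining multiplicativity of $\psi^k$ with the known value on $\overline{a}_1$ and the known $k^{n/2}$-action on the fixed-point spectrum yields an equation of the form $\overline{a}_1^r \cdot \psi^k(x) = k^{(a+b)/2} \overline{a}_1^r \cdot x$; injectivity of multiplication by $\overline{a}_1^r$ on the source bi-degree then solves for $\psi^k(x)$.

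The main obstacle is this last injectivity check, together with pinning down the Adams operation on $\CTMF_1(3)^{hC_2}$. The Adams-operation statement follows from the $2$-categorical naturality of $\Ot$ applied to the $[k]$-isogeny diagram over $\topos M_0(3)$, precisely parallel to the $\TMF_1(3)$ argument used above. The injectivity of multiplication by $\overline{a}_1^r$ in each source bi-degree can be read off the $\overline{a}_1$-module structure depicted in \cref{figure:preleiminaries:homotopyTMF13}: the free $\Z$-summands visible along the antidiagonal assemble into a rank-one free module over $\overline{a}_1$ along the $(1,1)$-direction, and the $\finfld{2}$-summands sit inside free $\finfld{2}[\overline{a}_1, \overline{a}_3]$-modules along the red lines, so multiplication by any positive power of $\overline{a}_1$ is injective on the relevant summands generated by $u^4_{2\sigma}$, $\overline{a}_1(1)$, and $v_0(m)$.
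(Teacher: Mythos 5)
Your proposal is correct and follows essentially the same route as the paper's proof: $\S$-linearity for $a_\sigma$, the strongly-even restriction isomorphism plus the nonequivariant computation for $\overline{a}_1$ and $\overline{a}_3$, and multiplication by powers of $\overline{a}_1$ into the integer-graded part $\pi_{2n}^{C_2}\CTMF_1(3) = \pi_{2n}\TMF_0(3)$ (where $\psi^k$ acts by $k^n$) for $u^4_{2\sigma}$, $\overline{a}_1(1)$ and $v_0(m)$. The injectivity you flag as the main obstacle is exactly what the paper handles by asserting that $\overline{a}_1^n\colon \pi_{n(1-\sigma)}^{C_2}\CTMF_1(3) \to \pi_{2n}^{C_2}\CTMF_1(3)$ is an isomorphism, read off from \cref{figure:preleiminaries:homotopyTMF13}.
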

\begin{proof}
    Again, $k$ will be implicitly inverted throughout the proof.
    As stable Adams operations are multiplicative it suffices to show the statement for the generators
    $a_\sigma$, $\overline{a}_1$, $\overline{a}_3$, $u^4_{2\sigma}$, $v_0(m)$ and $\overline{a}_1(1)$ of \cite[Theorem 4.15]{HillMeier:C2Tmf13}.

    Recall that $a_\sigma \in \pi_{-\sigma}^{C_2} \CTMF_1(3)$ is the image of $a_\sigma \in \pi_{-\sigma}^{C_2} \S$ under the unit
    $\S \to \CTMF_1(3)$ and that the stable Adams operations $\psi^k$ are $\S$-linear with $\psi^k(1) = 1$, meaning they commute with the unit: \[\psi^k(a_\sigma) = a_\sigma\psi^k(1) = a_\sigma.\]
    
    By \cite[Before Proposition 4.23]{HillMeier:C2Tmf13}, $\CTMF_1(3)$ is strongly even, i.e.\ for all $n \in \Z$ the morphism 
    \[\text{res}^{C_2}_e \colon \pi^{C_2}_{n(1+\sigma)}\CTMF_1(3) \to
        \pi^{e}_{n(1+\sigma)}\CTMF_1(3) \cong \pi^{e}_{2n}\CTMF_1(3) \cong \pi_{2n} \TMF_1(3)\]
    is an isomorphism.
    Combining \cite[Corollary 3.3]{MahowaldRezk:TMFofLevel3} and  \cite[Theorem B]{Davies:Hecke} yields
    the result for $\overline{a}_1$ and $\overline{a}_3$.

    By étale descent there is an equivalence $\CTMF_1(3)^{hC_2} \simeq \TMF_0(3)$, see \cite[§2]{MahowaldRezk:TMFofLevel3}, hence, using \cref{figure:preleiminaries:homotopyTMF13}, we have isomorphisms
    \begin{equation} \label{eq:computations:TMF:HFP}
        \overline{a}_1^n \colon \pi_{n(1-\sigma)}^{C_2} \CTMF_1(3) \xrightarrow{\simeq} \pi_{2n}^{C_2}\CTMF_1(3) = \pi_{2n} \TMF_0(3)
    \end{equation} 
    given by multiplication.
    Using \cite[Proposition 6.18]{Davies:OnLuriesTheorem} we obtain that the $\psi^k$-action on $\pi_\ast \TMF_0(3)$ is given by multiplication by $k^n$ in degree $2n$.
    Now compute \[k^8 \overline{a}_1^8 u^4_{2\sigma} = \psi^k(\overline{a}_1^8 u^4_{2\sigma}) = \psi^k(\overline{a}_1^8) \psi^k(u^4_{2\sigma}) = k^8 \overline{a}_1^8 \cdot \psi^k(u^4_{2\sigma}) \in \pi^{C_2}_{16} \CTMF_1(3) \simeq \Z .\]
    Since $k$ is invertible and \cref{eq:computations:TMF:HFP} is an isomorphism, this implies that $u_{2\sigma}^4 = \psi^k(u_{2\sigma}^4)$.
    The same argument applied to $\overline{a}_1^{2m} \cdot v_0(m)$ and $\overline{a}_1^3\cdot \overline{a}_1(1)$ also shows that $\psi^k(v_0(m)) = v_0(m)$ and $\psi^k(\overline{a}_1(1))= \overline{a}_1(1)$, respectively.
    This ends the proof.
\end{proof}
\begin{rmk}
    Note that the computation of $\psi^k(\overline{a}_1(1))$ can also be done by applying the same argument as above to $\overline{a}_3\cdot \overline{a}_1(1)$.
\end{rmk}

\bibliographystyle{amsalpha}
\bibliography{AdamsOpsTMF.bib}

\end{document}